\newtheorem{definition}{Definition}
\title{\LARGE \bf
Mutual Dependence: A Novel Method for Computing Dependencies \\Between Random Variables
}
\author{%
	Rahul Agarwal, Pierre Sacr\'{e}, and Sridevi V. Sarma% <-this stops a space
	\thanks{R. Agarwal, P. Sacr\'{e,} and S.V. Sarma are with the Institute for Computational Medicine and the Department of Biomedical Engineering, The Johns Hopkins University, Baltimore, MD (rahul.jhu@gmail.com, p.sacre@jhu.edu, sree@jhu.edu).}%
	}
\DeclareMathOperator{\sinc}{sinc}
\DeclareMathOperator*{\argmax}{arg\,max}
\newtheorem{theorem}{Theorem}[section]
\renewcommand{\iff}{\Leftrightarrow}
\begin{document}

\ifpdf
\DeclareGraphicsExtensions{.pdf, .jpg, .tif}
\else
\DeclareGraphicsExtensions{.eps, .jpg}
\fi

\maketitle

\begin{abstract}
In data science, it is often required to estimate dependencies between different data sources. These dependencies are typically calculated using Pearson's correlation, distance correlation, and/or mutual information. 
% need
However, none of these measures satisfy \emph{all} the Granger's axioms for an ``ideal measure''. One such ideal measure, proposed by Granger himself, calculates the Bhattacharyya distance between the joint probability density function (pdf) and the product of marginal pdfs. We call this measure the \emph{mutual dependence}. However, to date this measure has not been directly computable from data.  
% task
In this paper, we use our recently introduced maximum likelihood non-parametric estimator for band-limited pdfs, to compute the mutual dependence directly from the data.
% object
We construct the estimator of mutual dependence and compare its performance to standard measures (Pearson's and distance correlation) for different known pdfs by computing convergence rates, computational complexity, and the ability to capture nonlinear dependencies. 
% findings
Our mutual dependence estimator requires fewer samples to converge to theoretical values, is faster to compute, and captures more complex dependencies than standard measures. 
% conclusion
% perspectives
\end{abstract}

%%%%%%%%%%%%%%%%%%%%%%%%%%%%%%%%%%%%%%%%%%%%%%%%%%%%%%%%%%%%%%%%%%%%%%%%%%%%%%%%
\section{Introduction}
% context
In data science and modeling, it is often required to test whether two random variables are independent. Out of several measures that quantify dependencies between random variables \cite{Shannon1948,Lee-Rodgers1988,Szekely2007,Szekely2009}, the most widely used are mutual information $I$, Pearson's correlation $r$, and distance correlation $R$. 

% need
Mutual information, $I,$ is generally thought of as a benchmark for quantifying dependencies between random variables; however, it can only be computed by first estimating the joint and marginal probability density functions (pdfs). 
Pearson's correlation, $r$, can be directly estimated from data, but it does not capture nonlinear dependencies. 
Distance correlation, $R$, can also be directly estimated directly from data and can capture nonlinear dependencies, but is in general slow to compute (computational complexity $\mathcal{O}(n^2)$). Further, distance correlation often does not reflect the nonlinear dependencies correctly as described succinctly by R\'{e}nyi's axioms \cite{Renyi1959}, which were slightly improved upon by Granger, Maasoumi and Racine \cite{Granger2004}. See Table \ref{tb:axioms}. Specifically, distance correlation is not invariant under strictly monotonic transformations (6th axiom in Table \ref{tb:axioms}). 

An ``ideal measure'' should satisfy axioms given in Table~\ref{tb:axioms} and should be directly estimable from the data. A less popular and unnamed measure uses the Bhattacharyya distance between the joint pdf and the product of the marginals as a measure for dependence between two random variables~\cite{Kailath1967,Beran1977}. It has been shown that this measure satisfies \emph{all} six axioms. Importantly, this measure is invariant under continuous and strictly increasing transformations \cite{Skaug1996,Granger2004}. It is also closely related to mutual information, k-class entropy and copula \cite{Genest1986,Nelsen1999,Havrda1967}. In this paper, we call this measure \emph{mutual dependence},~$d$.

%Can you show the mutual dependence meaure-the formula here

\begin{table*}[t]
	\centering
	\caption{Desired properties of ideal dependency measure $\delta(X,Y)$.} \label{tb:axioms}
	\begin{tabular}{llllll}
		\# & Property & $r$ & $I$ & $R$ & $d$ \\
		\hline\\
		%$\delta(X,Y)$ & \checkmark & \checkmark & \checkmark &\checkmark \\
		1 & $\delta(X,Y) = \delta(Y,X)$ & \checkmark & \checkmark & \checkmark & \checkmark \\
		2 & $\delta(X,Y)=0$ iff $X$ and $Y$ are independent & & \checkmark & \checkmark & \checkmark \\
		3 & $0\leq\delta(X,Y) \leq 1$ & & & \checkmark & \checkmark \\
		4 & $\delta(X, Y ) = 1$ if there is a strict dependence between $X$ and $Y$ & & & \checkmark & \checkmark \\
		5 & $\delta(X, Y ) = f(r(X, Y ))$ if the joint distribution of $X$ and $Y$ is normal &\checkmark &\checkmark & \checkmark & \checkmark \\
		6 & $\delta(\psi_1(X), \psi_2(Y )) = \delta(X, Y )$ & & \checkmark  & & \checkmark
	\end{tabular}
	
	\flushleft \hspace{87.5pt}
	\tiny{Note: $\psi_1, \psi_2$ are strictly monotonic functions}
\end{table*}

Mutual dependence has not been widely used because, like mutual information, it requires non-parametric density estimation to compute the marginal and joint pdfs, which are then substituted into the theoretical measure and numerically integrated to yield estimates. This process is both computationally complex and inaccurate. 
% task
In this paper, we develop an estimator that estimates mutual dependence directly from the data. It uses our recently proposed Band-Limited Maximum Likelihood (BLML) estimator that maximizes the data likelihood function over a set of band-limited pdfs with known cut-off frequency $f_c$. The BLML estimator is consistent, efficiently computable, and results in a smooth pdf \cite{Agarwal2015}. The BLML estimator also has a faster rate of convergence and reduced computational complexity over other widely used non-parametric methods such as kernel density estimators. Along with these properties, if the BLML estimator is substituted into the expression for mutual dependence (see \eqref{dependency_eq}), the mutual dependence can be computed directly from the data without performing numerical integration, which is often inaccurate and inefficient. 

%refer to the equation with the expression..ok i se you have it later..refer to it..else the integration effects are vague

We show through simulations that $d$ converges faster than $R$ and $r$ for various data sets with different types of linear and nonlinear dependencies, and the convergence rate for computing $d$ is maintained for different type of nonlinearities. $d$ is faster to compute than $R$ as it has $\mathcal{O}(B^2+n)$ time complexity, where $B$ is the number of bins containing a finite number of samples which is always less than or equal to $n$ (the number of data samples).   

The paper is organized as follows.
Section~\ref{sec:motivation} discusses variation in different measures as a function of mutual information and nonlinearity.
Section~\ref{sec:metric} introduces the notion of mutual dependence and its estimator.
Section~\ref{sec:performance} uses simulation to compare convergence of mutual dependence with Pearson's and distance correlation for different nonlinearity dependencies and marginal pdfs. We end the paper with conclusions and future work in Section~\ref{sec:conclusions}.
 
\begin{figure}[t]
	\centering
	\includegraphics[width=0.5\textwidth]{./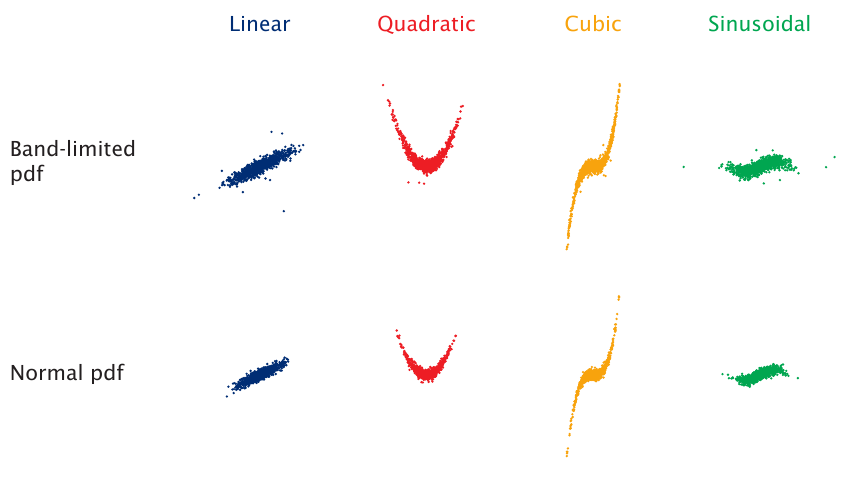}
	\caption{\textbf{Point clouds.} Illustrating point cloud for data generated from \eqref{genrating_eq} for different nonlinearities $g(x)$ and generating pdfs. $\rho=0.9$ was used for generating this data.}
	\label{fig:data}
\end{figure}
 
%%%%%%%%%%%%%%%%%%%%%%%%%%%%%%%%%%%%%%%%%%%%%%%%%%%%%%%%%%%%%%%%%%%%%%%%%%%%%%%%
\section{A motivating example} \label{sec:motivation}

%\textcolor{red}{Difference between $X$ and $Y$ and $x$ and $y$?? X is the random variable, x is the argument of the pdf...}

Consider two random variables $X$ and $Y$ defined as:
\begin{subequations}\label{genrating_eq}
	\begin{align*}
 		X& =V, \\
 		Y& =\rho \, g(X) + \sqrt{1-\rho^2} \, U,
 	\end{align*}
\end{subequations}
where $U$ and $V$ follow either a band-limited pdf 
\begin{equation*}
	f_X(x)=\frac{3}{4}\left[\sinc^4(0.2\,x-0.1)+\sinc^4(0.2\,x +0.1)\right]
\end{equation*}  
% (Figure \ref{fig:data} top row) 
or a normal pdf 
\begin{equation*}
	f_X(x)=\frac{1}{\sigma\sqrt{2\,\pi}} e^{-\frac{(x-\mu)^2}{2\,\sigma^2}}
\end{equation*} 
% (Figure \ref{fig:theoretical_prf} bottom row).
and where $g(X)$ is one of four types of (nonlinear) dependence among 
\begin{equation*}
	\text{$X$, $X^2$, $X^3$, or $\sin(X)$}.
\end{equation*} 
The `spread' $\rho$ is varied from $0.1$ to $0.9$ to obtain different degrees of dependencies. Figure \ref{fig:data} illustrates the data generated in this example.
 
 %motivate why you care to compare to mutual information here..
The goal of the dependency measures is to quantify dependencies between $X$ and $Y$ given the data. In cases where underlying pdfs are known these dependency are captured pretty nicely by mutual information.    
 
Therefore in Figure \ref{fig:theoretical_prf}, we plot theoretical values for Pearson's and distance correlation of dependence as a function of mutual information for the four different nonlinearity types and the two different generating pdfs. 
\begin{itemize}
\item Mutual information
\begin{align}
I=\int \log\left(\frac{f_{XY}(x,y)}{f_X(x)f_Y(y)}\right) f_{XY}(x,y) \mathrm{d}x\mathrm{d}y
\end{align}
\item Pearson's correlation
\begin{align}
r=\frac{E(XY)-E(X)E(Y)}{\sqrt{E(X^2)-E(X)^2}\sqrt{E(Y^2)-E(Y)^2}} .
\end{align}
\item Distance correlation
\begin{align}
\mathrm{dCov}^2(X,Y)& =\int\frac{|\phi_{XY}(s,t)-\phi_X(s)\phi_Y(t)|^2}{|s|^{1+p}|t|^{1+q}} \mathrm{d}s\mathrm{d}t \nonumber \\
R &=\frac{\mathrm{dCov}(X,Y)}{\sqrt{\mathrm{dCov}(X,X) \mathrm{dCov}(Y,Y)}}
\end{align}
here $\phi_{XY}$, $\phi_X$, $\phi_Y$ are the respective characteristic functions. $p$ and $q$ are the dimension of $X$ and $Y$. For details see \cite{Szekely2007}. (Note we have eliminated the constants $c_p,c_q$ from the definition of $\mathrm{dCov}$ as they are not needed to define $R$).

\end{itemize}

%you did not define p and q in formula for dCOV

%\textcolor{red}{shall we define measures?}
Both Pearson's and distance correlation measures depend largely on the nonlinearity for a given value of mutual information. This variability may occur because both the types of correlation measures are not invariant to strictly monotonic transformations, unlike mutual information. Therefore, changing the type of nonlinearity results in different values for both Pearson's and distance correlation, while the mutual information remains invariant. Such variance is undesirable as it may lead to incorrect inferences when comparing dependencies between data having different types of nonlinear dependencies. Therefore, a measure that is invariant to strictly monotonic transformations is desirable.    
 
\begin{figure}
	\centering
	\includegraphics[width=0.5\textwidth]{./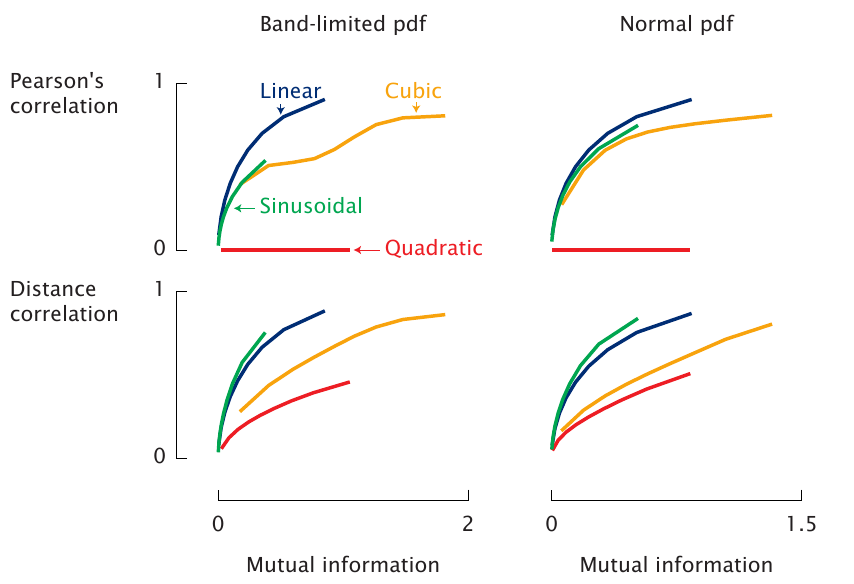}
	\caption{\textbf{Pearson's and distance correlation.} Illustrating theoretical values of $r$ and $R$ as a function of $I$ for different nonlinearities and generating pdfs as used in Figure \ref{fig:data}.}
	\label{fig:theoretical_prf}
\end{figure}  
 
%%%%%%%%%%%%%%%%%%%%%%%%%%%%%%%%%%%%%%%%%%%%%%%%%%%%%%%%%%%%%%%%%%%%%%%%%%%%%%%%
\section{Mutual dependence and its estimation} \label{sec:metric}

In this section, we introduce the {\it mutual dependence}, which is based on an unnamed existing measure, and show several properties of this measure. Then, we derive an estimator of mutual dependence derived directly from data generated from band-limited pdfs. Finally, we  describe efficient algorithms to compute this estimator.

\subsection{Mutual dependence}
Consider two random variables $X$ and $Y$, their joint distribution $f_{XY}(x,y)$, and their marginal distributions $f_X(x)$ and $f_Y(y)$. 
These random variables are independent if and only if $f_{XY}(x,y) = f_X(x)\,f_Y(y)$. It is therefore natural to measure dependence as the distance (in the space of pdfs) between the joint and the product of marginal distributions. A good distance candidate is the Bhattacharyya distance (also known as Hellinger distance). See \cite{Granger2004, Skaug1996} for details.

\begin{definition}
The \emph{mutual dependence} $d(X,Y)$ between two random variables $X$ and $Y$ is defined as the Bhattacharyya distance $d_h(\cdot,\cdot)$ between their joint distribution $f_{XY}(x,y)$ and the product of their marginal distributions $f_X(x)$ and $f_Y(y)$, that is,
\begin{equation}\label{dependency_eq}
d(X,Y) \triangleq d_{h}(f_{XY}(x,y),f_X(x)\,f_Y(y))
\end{equation}
with 
\begin{equation}
	d^2_{h}(p(\mathbf{x}),q(\mathbf{x})) \triangleq \frac{1}{2}\int\left(\sqrt{p(\mathbf{x})}- \sqrt{q(\mathbf{x})}\right)^2 \mathrm{d}\mathbf{x}.
\end{equation}
\end{definition}

We call this measure `mutual dependence' as it represents mutual information most closely. For a given value of mutual information, the value of mutual dependence remains almost the same irrespective of the nonlinearity type, which is not true for Pearson's and distance correlation measures.

\subsection{Properties of mutual dependence}

Due to symmetry of $d(\cdot,\cdot)$, it is easy to see that $d(X,Y)=d(Y,X)$. The measure $d\in(0,1)$ if $X$ and $Y$ are partially dependent which quantifies the degree of dependence between the two random variables. 
In the extreme cases, $d=0$ $\iff$ $X$ and $Y$ are independent and $d=1$ if either $x$ or $y$ is a Borel-measurable function of the other.  Also, it can be easily established that $d$ is invariant under strictly monotonic transformations $\psi_1$ and $\psi_2$, i.e $d(X,Y)=d(\psi_1(X),\psi_2(Y))$. A detailed description of these properties can be found in \cite{Granger2004,Skaug1996}.

For jointly normal data, the mutual dependence can be estimated by first calculating the Bhattacharyya distance between two multivariate Gaussian distributions \cite{Pardo-Llorente2006}
\begin{align}
	d_h^2 & = 1 - \frac{\left|\Sigma_1\right|^{\frac{1}{4}} \left|\Sigma_2\right|^{\frac{1}{4}}}{\left|\frac{1}{2}\Sigma_1 + \frac{1}{2}\Sigma_2\right|^{\frac{1}{2}}} \times  \nonumber \\
& \ \ 	\exp\left(-\frac{1}{8}(\mu_1 - \mu_2)^T\left(\frac{1}{2}\Sigma_1 + \frac{1}{2}\Sigma_2\right)^{-1}(\mu_1 - \mu_2)\right)
\end{align}
where $\mu_1$ and $\mu_2$ are the mean vectors and $\Sigma_1$ and $\Sigma_2$ covariance matrices.
Then substituting
\begin{align*}
	\mu_1 &=0, & \Sigma_1 & =\left[\begin{array}{cc} \sigma^2_x & \rho \, \sigma_x \,\sigma_y \\ \rho \, \sigma_x \,\sigma_y & \sigma^2_y\end{array}\right],\\
	\mu_2 &=0, & \Sigma_2 & = \left[\begin{array}{cc} \sigma^2_x & 0 \\ 0 & \sigma^2_y\end{array}\right], 
\end{align*}
gives
\begin{align}
	d(X,Y) & = \sqrt{1 - \frac{(1-\rho^2)^{\frac{1}{4}}}{(1-\frac{1}{4}\rho^2)^{\frac{1}{2}}}} 
	\triangleq M(\rho).
\end{align}

This shows that mutual dependence satisfies axiom 5 (see Table 1). %or something like this

\begin{figure}
	\centering
	\includegraphics[width=0.5\textwidth]{./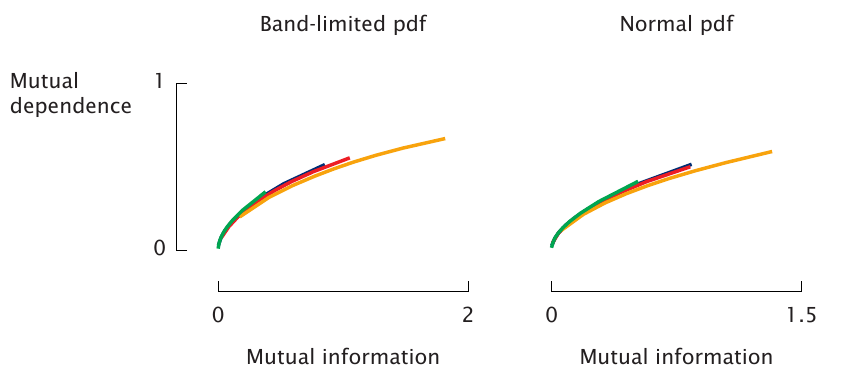}
	\caption{\textbf{Mutual dependence.} Illustrating theoretical values of $d$ as a function of $I$ for different nonlinearities and generating pdfs as used in Figure \ref{fig:data}.}
	\label{fig:theoretical_prf2}
\end{figure}

\subsection{Estimation of mutual dependence} \label{sec:estimation}
To estimate $d,$ we use the BLML method \cite{Agarwal2015} that maximizes the likelihood of observing data samples over the set of band-limited pdfs. The BLML estimator is shown to outperform kernel density estimators (KDE) both in convergence rates and computational time and hence provides a better alternative for non-parametric estimation of pdfs. In addition, the structure of the BLML estimator is well suited for evaluating the integral in \eqref{dependency_eq}, resulting in an estimate which is a direct function of observed data and hence avoids numerical integration errors. 

Below we briefly describe the BLML estimator.

\vspace{0.1in}

\begin{theorem}\label{BLML_thm} 
Consider $n$ independent samples of an unknown BL pdf, $f_X(x)$, with assumed cut-off frequency $f_c.$ Then the BLML estimator of $f_X(x)$ is given as:
\begin{equation}\label{BLML_eq}
\hat{f}_X(x)=\left(\frac{1}{n}\sum_{i=1}^n\hat{c}_i \sinc_\bold{f_c}({\bf x-x}_i)\right)^2
\end{equation}
where, $\bold{f_c}\in \mathbb{R}^m$ is the assumed cutoff frequency, vectors $\bold{x}_i$'s, with $i=1\cdots n$, are the data samples, $\sinc_\bold{f_c}(\bold{x})\triangleq \prod_{k=1}^{m}\frac{\sin(\pi f_{ck}x_k)}{\pi x_k}$ and the vector $\bold{\hat{c}}\triangleq \left[ \hat{c}_1, \cdots, \hat{c}_n \right]^T$, is given by
\begin{equation}\label{BLMLgen_eq2}
\hat{\mathbf{c}}=\argmax_{ \bm{\rho_n}(\bold{c)=0} }\left(\prod \frac{1}{c_i^2}\right).
\end{equation}
\noindent
Here $\rho_{ni}(\bold{c})\triangleq \sum_{j=1}^n c_js_{ij}-\frac{n}{c_i}$ with $ s_{ij}\triangleq \sinc_\bold{f_c}({\bf x}_i-{\bf x}_j)$.
\end{theorem}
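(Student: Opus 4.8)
The plan is to recast the likelihood maximization as a constrained optimization over a reproducing-kernel Hilbert space (RKHS) and then apply a representer-type argument. First I would parametrize a band-limited pdf by its square root: write $f_X = g^2$. The hypothesis that $f_X$ is band-limited with cut-off $\mathbf{f_c}$ forces $g$ to lie in the Paley--Wiener space $\mathcal{B}$ of $L^2$ functions whose Fourier transform is supported in the box determined by $\mathbf{f_c}$; this $\mathcal{B}$ is an RKHS, and with the appropriate normalization its reproducing kernel is exactly $k(\mathbf{x},\mathbf{x}') = \sinc_{\mathbf{f_c}}(\mathbf{x}-\mathbf{x}')$ --- the factor of two between the cut-off of $f_X$ and that of $g$ being absorbed into the kernel normalization, and the product form of $\sinc_{\mathbf{f_c}}$ reflecting the tensor-product structure of $\mathcal{B}$. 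In these variables the normalization $\int f_X = 1$ reads $\|g\|_{L^2}^2 = 1$ and the data likelihood reads $\prod_{i=1}^n f_X(\mathbf{x}_i) = \prod_{i=1}^n g(\mathbf{x}_i)^2 = \prod_{i=1}^n \langle g, k(\cdot,\mathbf{x}_i)\rangle^2$.

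Second I would show that an optimal $g$ lies in $\mathrm{span}\{k(\cdot,\mathbf{x}_i)\}_{i=1}^n$. Split $g = g_\parallel + g_\perp$ with $g_\parallel$ the orthogonal projection onto that span; each value $g(\mathbf{x}_i) = \langle g_\parallel, k(\cdot,\mathbf{x}_i)\rangle$ depends only on $g_\parallel$, while $\|g_\parallel\| \le \|g\| = 1$, so replacing $g$ by $g_\parallel/\|g_\parallel\|$ does not decrease the likelihood and keeps feasibility --- hence at a maximizer $g = g_\parallel$ (necessarily $g_\parallel\neq 0$, since otherwise the likelihood would vanish), and I may write $g = \tfrac1n\sum_{i} c_i\,k(\cdot,\mathbf{x}_i) = \tfrac1n\sum_i c_i\,\sinc_{\mathbf{f_c}}(\cdot-\mathbf{x}_i)$, which is the announced form of $\hat f_X$ in \eqref{BLML_eq}. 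Writing $s_{ij} = \sinc_{\mathbf{f_c}}(\mathbf{x}_i-\mathbf{x}_j)$ and $\mathbf{S}=[s_{ij}]$, the reproducing property gives $g(\mathbf{x}_i) = \tfrac1n(\mathbf{S}\mathbf{c})_i$ and $\|g\|^2 = \tfrac1{n^2}\,\mathbf{c}^T\mathbf{S}\mathbf{c}$; here $\mathbf{S}$ is symmetric and, for distinct samples, invertible, since the sinc kernel is strictly positive definite on distinct points.

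Third I would use scale invariance. Maximizing $\prod_i f_X(\mathbf{x}_i)$ subject to $\int f_X = 1$ is equivalent to maximizing, freely over $\mathbf{c}\in\mathbb{R}^n$, the scale-invariant functional $Q(\mathbf{c}) = \prod_i(\mathbf{S}\mathbf{c})_i^2 / (\mathbf{c}^T\mathbf{S}\mathbf{c})^n$. Setting $\nabla_{\mathbf c}\log Q = 0$ and using $\nabla_{\mathbf c}(\mathbf{S}\mathbf{c})_i = \mathbf{S}\mathbf{e}_i$ together with the symmetry of $\mathbf{S}$ yields $\mathbf{S}\mathbf{v} = \tfrac{n}{\mathbf{c}^T\mathbf{S}\mathbf{c}}\,\mathbf{S}\mathbf{c}$, where $v_i = 1/(\mathbf{S}\mathbf{c})_i$; cancelling the invertible $\mathbf{S}$ gives $c_i(\mathbf{S}\mathbf{c})_i = \mathbf{c}^T\mathbf{S}\mathbf{c}/n$, a quantity independent of $i$. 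Since $Q$ is scale invariant I may rescale $\mathbf{c}$ so that this common value equals $n$, i.e. $(\mathbf{S}\mathbf{c})_i = n/c_i$ for all $i$, which is exactly $\rho_{ni}(\mathbf{c}) = 0$. On this set $\mathbf{c}^T\mathbf{S}\mathbf{c} = \sum_i c_i(\mathbf{S}\mathbf{c})_i = n^2$ --- so $\int\hat f_X = 1$ automatically --- and $\prod_i(\mathbf{S}\mathbf{c})_i^2 = n^{2n}/\prod_i c_i^2$, whence $Q(\mathbf{c}) = \prod_i 1/c_i^2$. Conversely every $\mathbf{c}$ with $\rho_n(\mathbf{c})=0$ is a rescaled critical point, so restricting the search to $\{\rho_n(\mathbf{c})=0\}$ loses no maximizer; this yields \eqref{BLMLgen_eq2} and the theorem.

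I expect the main obstacle to be the first step: justifying the square-root (spectral-factorization) parametrization of band-limited pdfs in the multivariate case --- equivalently, that the likelihood-optimal band-limited pdf really is of the form $g^2$ with $g\in\mathcal{B}$ --- and pinning down the kernel normalization so that the reproducing kernel is literally $\sinc_{\mathbf{f_c}}$. A secondary point is existence: one must check that $Q$ attains its maximum (continuity and boundedness on the unit sphere, positive definiteness of $\mathbf{S}$, and that the maximal value is strictly positive so that no $(\mathbf{S}\mathbf{c})_i$ vanishes) and that the stationary point found is the global maximizer rather than a saddle. The algebra in the second and third steps is routine.
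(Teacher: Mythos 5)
The paper does not actually prove Theorem~\ref{BLML_thm}; it imports the result verbatim from \cite{Agarwal2015} with ``see reference for details,'' so there is no in-paper proof to compare against. Your reconstruction is, in essentials, the derivation given in that source: parametrize the density as the square of a band-limited function, argue that the optimal square root lies in the span of the sinc kernels at the data points (the reference does this by a variational/Euler--Lagrange argument rather than an explicit RKHS representer theorem, but the content is the same), and then show that the first-order conditions, after fixing the scale by $\mathbf{c}^T\mathbf{S}\mathbf{c}=n^2$, are exactly $\rho_{ni}(\mathbf{c})=0$ with the likelihood reducing to $\prod_i 1/c_i^2$ on that set; your algebra in steps two and three checks out. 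The one obstacle you flag is genuine --- in several variables a nonnegative band-limited function need not factor as $g^2$ with $g$ band-limited at half the cut-off --- but in the cited work this is handled at the level of the hypothesis class (the set of ``BL pdfs'' over which the likelihood is maximized is effectively the set of squares of band-limited functions), so it is a definitional matter rather than a gap in the argument.
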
 

See~\cite{Agarwal2015} for details. Now we introduce the estimator for $d$, $\hat{d}$ in the following theorem.

\begin{figure*}
	\centering
	\includegraphics[width=0.975\textwidth]{./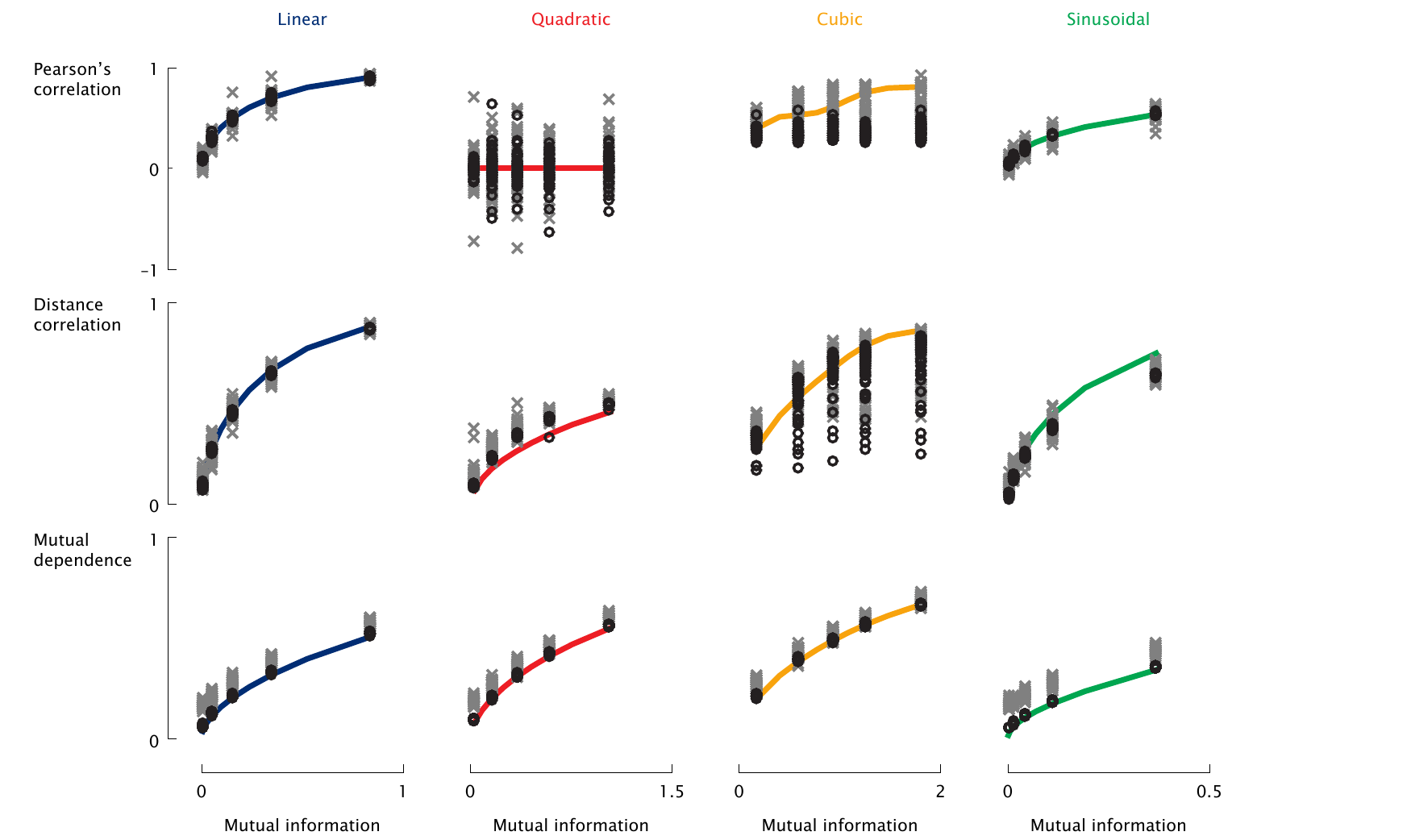}
	\caption{\textbf{Monte Carlo Estimates for band-limited generating pdfs.} The Monte Carlo distribution of estimates for different measures for different nonlinearities and band-limited generating pdfs. $\times$ess mark the estimates calculated using sample sizes $n=316$ whereas $\circ$s mark the estimates calculated using sample size $n=10000$. $d$ is estimate assuming the cut-off frequency $f_c=\frac{1}{1-\rho^2}$.}
	\label{fig:convergencebl}
\end{figure*}

\begin{figure*}
	\centering
	\includegraphics[width=0.975\textwidth]{./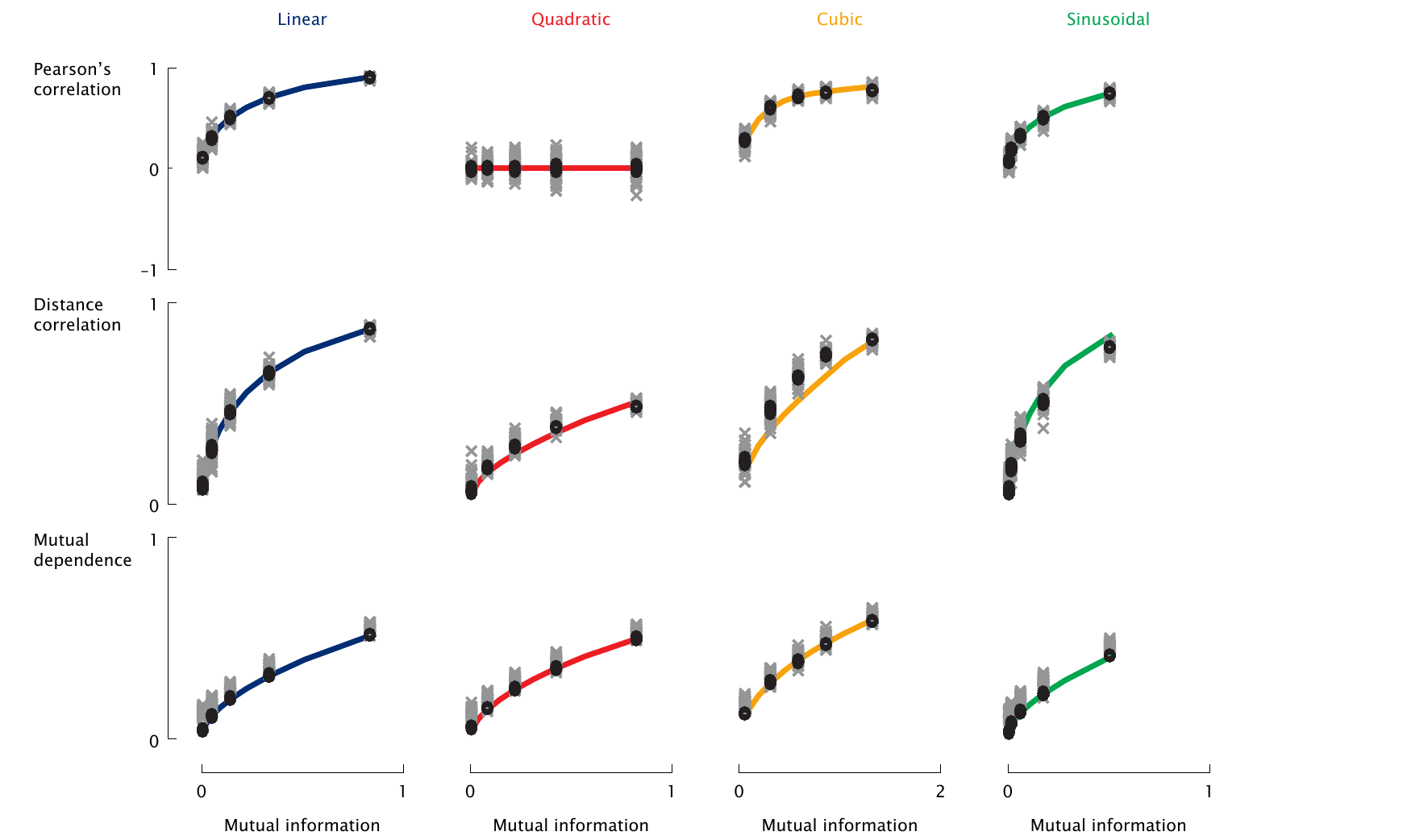}
	\caption{\textbf{Monte Carlo Estimates for normal generating pdfs.}  The Monte Carlo distribution of estimates for different measures for different nonlinearities and normal generating pdf. $\times$es mark the estimates calculated using sample sizes $n=316$ whereas $\circ$s mark the estimates calculated using sample size $n=10000$. $d$ is estimate assuming the cut-off frequency $f_c=\frac{1}{1-\rho^2}$.}
	\label{fig:convergencenormal}
\end{figure*} 

\begin{theorem}
If $(x_i,y_i)$ \  $i=1,\cdots,n$ are $n$ paired independent and identically distributed data observations and $f_c$ is the cut-off frequency parameter. Then the estimator for mutual dependence is given as:

\begin{equation}
\hat{d}\triangleq d_h(\hat{f}_{XY},\hat{f}_X\hat{f}_Y)=\sqrt{1-\frac{1}{n}\sum\frac{\hat{c}_i^{(XY)}}{\hat{c}_i^{(X)}\hat{c}_i^{(Y)}}}
\end{equation}
\noindent
where $\hat{\bold{c}}^{(XY)}=\{\hat{c}_i^{(XY)}\}_{i=1}^n$ is given by:
\begin{subequations}
\begin{align*}
\hat{\bold{c}}^{(XY)} &=\argmax_{ \bm{\rho_n^{(XY)}}(\bold{c)=0} }\left(\prod \frac{1}{c_i^2}\right) \\
\rho_{ni}^{(XY)}(\bold{c}) & \triangleq \sum_{j=1}^n c_j\frac{\sin(\pi f_{c}(x_i-x_j)}{\pi (x_i- x_j)} \frac{\sin(\pi f_{c}(y_i-y_j))}{\pi(y_i-y_j)}-\frac{n}{c_i}, 
\end{align*}
\end{subequations}
\noindent
$\hat{\bold{c}}^{(X)}=\{\hat{c}_i^{(X)}\}_{i=1}^n$  is:
\begin{subequations}
\begin{align*}
\hat{\bold{c}}^{(X)} & =\argmax_{ \bm{\rho_n^{(X)}}(\bold{c)=0} }\left(\prod \frac{1}{c_i^2}\right) \\
\rho_{ni}^{(X)}(\bold{c}) & \triangleq \sum_{j=1}^n c_j\frac{\sin(\pi f_{c}(x_i-x_j)}{\pi (x_i- x_j)}-\frac{n}{c_i}
\end{align*}
\end{subequations}
\noindent
and $\hat{\bold{c}}^{(Y)}=\{\hat{c}_i^{(Y)}\}_{i=1}^n$ is: 
\begin{subequations}
\begin{align*}
\hat{\bold{c}}^{(Y)} & =\argmax_{ \bm{\rho_n^{(Y)}}(\bold{c)=0} }\left(\prod \frac{1}{c_i^2}\right) \\
\rho_{ni}^{(Y)}(\bold{c}) & \triangleq \sum_{j=1}^n c_j \frac{\sin(\pi f_{c}(y_i-y_j))}{\pi(y_i-y_j)}-\frac{n}{c_i}. \nonumber
\end{align*}
\end{subequations}

\end{theorem}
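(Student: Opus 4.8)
\emph{Proof plan.}
The strategy is: (i) rewrite the squared Bhattacharyya distance as $1$ minus a Bhattacharyya‑coefficient integral; (ii) substitute the explicit BLML forms of $\hat f_{XY}$, $\hat f_X$, $\hat f_Y$ from Theorem~\ref{BLML_thm}; (iii) evaluate the resulting integral in closed form via the reproducing property of the sinc kernel; and (iv) collapse the outcome using the stationarity conditions $\bm{\rho}_n(\mathbf c)=\mathbf 0$ that define $\hat{\mathbf c}^{(X)}$ and $\hat{\mathbf c}^{(Y)}$.

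The one analytic fact I would record first is the sinc reproducing identity: writing $\kappa(t)\triangleq\sin(\pi f_c t)/(\pi t)$, Parseval's theorem together with $\widehat{\kappa(\cdot-a)}(\xi)=e^{-2\pi i a\xi}\,\mathbf 1_{[-f_c/2,\,f_c/2]}(\xi)$ (up to the usual normalization) gives $\int_{\mathbb R}\kappa(t-a)\,\kappa(t-b)\,\mathrm dt=\kappa(a-b)$. In particular each BLML estimator integrates to one — e.g.\ $\int\hat f_X=\frac1{n^2}\sum_{i,j}\hat c_i^{(X)}\hat c_j^{(X)}\kappa(x_i-x_j)=\frac1{n^2}\sum_i\hat c_i^{(X)}\cdot\frac n{\hat c_i^{(X)}}=1$, using $\rho_{ni}^{(X)}(\hat{\mathbf c}^{(X)})=0$, and likewise for $\hat f_Y$ and $\hat f_{XY}$ — so $\hat f_{XY}$ and $\hat f_X\hat f_Y$ are genuine densities and expanding the square in the definition of $d_h^2$ yields
\begin{equation*}
\hat d^{\,2}=1-\int\!\!\int\sqrt{\hat f_{XY}(x,y)}\;\sqrt{\hat f_X(x)\,\hat f_Y(y)}\;\mathrm dx\,\mathrm dy .
\end{equation*}
It therefore suffices to show the double integral equals $\tfrac1n\sum_i\hat c_i^{(XY)}/(\hat c_i^{(X)}\hat c_i^{(Y)})$.

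Next I would substitute $\sqrt{\hat f_{XY}(x,y)}=\frac1n\sum_i\hat c_i^{(XY)}\kappa(x-x_i)\kappa(y-y_i)$ — the two‑dimensional BLML estimator of Theorem~\ref{BLML_thm} taken with product cut‑off $(f_c,f_c)$ has this \emph{separable} kernel and a single coefficient per paired sample — together with $\sqrt{\hat f_X(x)}=\frac1n\sum_j\hat c_j^{(X)}\kappa(x-x_j)$ and $\sqrt{\hat f_Y(y)}=\frac1n\sum_k\hat c_k^{(Y)}\kappa(y-y_k)$. Multiplying the three expressions and exchanging the finite sums with the integral (legitimate since $\kappa\in L^2(\mathbb R)$), the $x$‑ and $y$‑integrals separate and, by the reproducing identity, reduce to $\kappa(x_i-x_j)$ and $\kappa(y_i-y_k)$ — precisely the coefficients appearing in $\rho_{ni}^{(X)}$ and $\rho_{ni}^{(Y)}$. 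Grouping the sums over $j$ and $k$ for fixed $i$ and invoking $\sum_j\hat c_j^{(X)}\kappa(x_i-x_j)=n/\hat c_i^{(X)}$ and $\sum_k\hat c_k^{(Y)}\kappa(y_i-y_k)=n/\hat c_i^{(Y)}$, the double integral telescopes to $\frac1{n^3}\sum_i\hat c_i^{(XY)}\cdot\frac{n}{\hat c_i^{(X)}}\cdot\frac{n}{\hat c_i^{(Y)}}=\frac1n\sum_i\hat c_i^{(XY)}/(\hat c_i^{(X)}\hat c_i^{(Y)})$, and plugging back into the displayed identity gives $\hat d=\sqrt{1-\frac1n\sum_i\hat c_i^{(XY)}/(\hat c_i^{(X)}\hat c_i^{(Y)})}$, as claimed.

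Most of this is bookkeeping; I expect the only genuinely delicate point to be the meaning of $\sqrt{\hat f}$ in step (ii). Literally $\sqrt{\hat f}=\bigl|\frac1n\sum_i\hat c_i\kappa(\cdot-x_i)\bigr|$, whereas the identity above uses the \emph{signed} root — the object that is actually parametrized in the BLML construction of \cite{Agarwal2015}. Verifying that this sign convention is applied consistently to $\hat f_{XY}$, $\hat f_X$, $\hat f_Y$ inside the Bhattacharyya integrand, and that the separability of the joint kernel asserted above really does follow from Theorem~\ref{BLML_thm} (it is exactly what makes the double integral factor and the triple sum telescope, and without it one is back to intractable numerical integration), is where the care has to go.
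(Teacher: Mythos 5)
Your proposal is correct and follows the same route as the paper: the paper's proof is a one-line instruction to plug the BLML estimators into the Bhattacharyya integral and integrate, and your write-up simply carries out that integration explicitly (via the sinc reproducing identity $\int\kappa(t-a)\kappa(t-b)\,\mathrm{d}t=\kappa(a-b)$ and the stationarity constraints $\sum_j\hat c_j s_{ij}=n/\hat c_i$), arriving at exactly the claimed closed form. The two delicate points you flag — the signed square root and the separability of the joint kernel — are precisely the implicit conventions the paper relies on, so there is no gap.
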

\begin{proof}
The BLML estimators of  $f_{XY}$, $f_X$ and $f_Y$ from Theorem \ref{BLML_thm} (using same cut-off frequency $[f_c, f_c], f_c$ and $f_c$ respectively) are plugged into \eqref{dependency_eq} and the resultant equation is integrated which gives $\hat{d}$.
\end{proof}

\subsection{Computation of mutual dependence}
As described in \cite{Agarwal2015} solving for $\hat{\bold{c}}$ requires exponential time. Therefore, heuristic algorithms also described in  \cite{Agarwal2015} such as \texttt{BLMLBQP} and \texttt{BLMLTrivial}, can be used directly to compute $c^{(XY)}_i$, $c^{(X)}_i$, $c^{(Y)}_i$ approximately for each $i$ for small scale ($n<100$) and large scale ($n>100$) problems, respectively. 

To further improve the computational time \texttt{BLMLQuick} algorithm \cite{Agarwal2015} can also be used. \texttt{BLMLQuick} uses binning  and estimates $c^{(XY)}_i$, $c^{(X)}_i$, $c^{(Y)}_i$ approximately for each $i$. It is also shown in \cite{Agarwal2015} that both \texttt{BLMLTrivial} and \texttt{BLMLQuick} algorithms yield consistent estimate of pdfs if the true pdf is strictly positive, therefore in cases where the joint $f_{XY}>0$ the estimate, $d$ is also consistent.

%%%%%%%%%%%%%%%%%%%%%%%%%%%%%%%%%%%%%%%%%%%%%%%%%%%%%%%%%%%%%%%%%%%%%%%%%%%%%%%%
\section{Performance of mutual dependence} \label{sec:performance}

%ok you change to past tense here "evaluated"..check tense throughout..i changed to evaluate

In this section, we evaluate the performance of our estimator for mutual information by first comparing the empirical distribution of the estimator with the empirical distribution of the estimators for Pearson's and distance correlation for different mutual information values, $I$, nonlinearities, $g(X)$,  and generating pdfs, $f_X(x).$  We compare the convergence of these metrics to the true values for different sample sizes. Finally, we compare the computational complexity of our estimator with the estimator for distance correlation to evaluate the relative computational time needed to implement each estimator. 

\subsection{Comparison of convergence rate for different nonlinearities}
Figures \ref{fig:convergencebl} and \ref{fig:convergencenormal} plot the estimated $\hat{r}$, $\hat{R}$ and $\hat{d}$ for $n=316$ and $n=10000$ from about 50 Monte Carlo runs as a function of $I$ for different nonlinearities (linear, quadratic, cubic and sinusoidal) and generating pdfs (band-limited and normal). Underlaid are the respective theoretical values. Specifically, the first row shows about 50 Monte Carlo computation of $\hat{r}$ for different $I$ values, nonlinearities and generating pdfs. It can be seen that for both $n=316$ and $n=10000$, $\hat{r}$ works best for linear and sinusoidal data, but for quadratic data $\hat{r}$ has a larger variance and for cubic data $\hat{r}$ has a larger bias in bandlimited case. The second row shows 50 Monte Carlo computations of $\hat{R}$ for different $I$ values, nonlinearities and generating pdfs. It can be seen that for both $n=316$ and $n=10000$, $\hat{R}$ works best for linear data, but for quadratic and sinusoidal data, it has larger bias whereas for cubic data it has larger variance. The bottom row shows 50 Monte Carlo computation of $\hat{d}$ for different $I$ values, nonlinearities and generating pdfs. It can be seen that $\hat{d}$ works equally good for all nonlinearities and shows less bias and variance than both $\hat{r}$ and $\hat{R}$.

Figure \ref{fig:error} plots the integration (over different $I$ values) of mean squared error (IMSE) between the theoretical and estimated measures using about 50 Monte Carlo runs, for different nonlinearities and generating pdf types.
\begin{equation}
IMSE=\int\frac{1}{m}\sum_{i=1}^{m} (\hat{\delta}(I)-\delta(I))^2 \, \mathrm{d}I
\end{equation}

Here, $m$ is the number of Monte Carlo simulations and $\delta$ is the dependency metric. It can be seen from the Figure \ref{fig:error} that the convergence rate is fastest for $\hat{d}$ irrespective of nonlinearity type and/or generating pdf. $r$ and $R$ show an equally fast convergence rate for linear and normal data, but the rate is slower for nonlinear and non-normal data. Specifically, the first row shows convergence of $\hat{r}$, from which it can be established that convergence of $\hat{r}$ to the theoretical values is fastest for linear data. For nonlinear data, the convergence is slo either due to large bias or variance as discussed previously. The second row shows convergence of $\hat{R}$. It can be seen that $\hat{R}$ does well for linear data, but the rate slows down and saturates for nonlinear data again due to either large bias or variance. Specially, for cubic and band-limited data, the IMSE of  $\hat{R}$ does not decrease with increasing the number of samples, this is due to the nondecreasing variance of the estimator (see Figure \ref{fig:convergencebl}). The bottom row shows convergence of $\hat{d}$. It can be seen that $\hat{d}$ converges equally well for all data types and generating pdfs.  
 
\begin{figure}
	\centering
	\includegraphics{./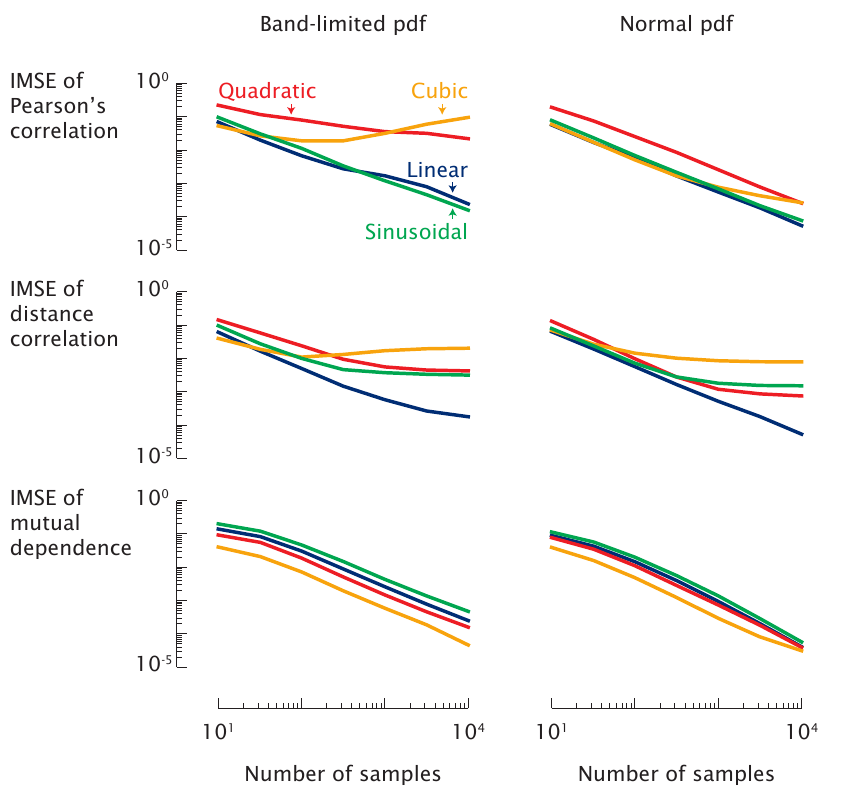}
	\caption{\textbf{Integrated mean squared error vs sample size.} Showing the Integrated mean squared error as a function of sample size $n$ for different measures, different nonlinearities and different generating pdfs. $d$ is estimate assuming the cut-off frequency $f_c=\frac{1}{1-\rho^2}$.}
	\label{fig:error}
\end{figure} 
 
\subsection{Comparison of computational time} 
 The computational complexity of computing $\hat{r}$ is least which is $\mathcal{O}(n)$, whereas computational complexity of computing $\hat{R}$ is maximum which is $\mathcal{O}(n^2)$. $\hat{d}$ is same as computational complexity of \texttt{BLMLQuick} algorithm which is $\mathcal{O}(B^2+n)$, where $B$ is the number of bins containing nonzero number of samples, which is always less than equal to $n$. For dense data $B \ll n$ therefore computation of $\hat{d}$ is a lot quicker than estimating $\hat{R}$ in such cases.  
 
\section{Conclusions}\label{sec:conclusions}
In this paper, we introduced a novel estimator for measuring dependency that can be directly computed from the data. Our estimator computes the mutual dependence which is an ``ideal'' measure for dependence between two random variables~\cite{Granger2004}. Our estimator has advantages over mutual information estimators as it does not require estimating the pdfs from data. It also has advantage over Pearson's and distance correlation estimators as it is invariant under strictly monotonic transformation. Further, we showed that under simulation, estimators of both Pearson's and distance correlation require more samples to achieve the same integrated mean squared error (IMSE) as compared to our mutual dependence estimator showing lower convergence rate. The slower convergence rate for the estimators of Pearson's and distance correlation was due to their higher variance and bias for the nonlinearly dependent data. Such nonlinearities did not affect our estimator and it showed a uniform decrease in IMSE as the sample size increases for all tested nonlinearities. Even further, our estimate for mutual dependence showed a computational time complexity of $\mathcal{O}(B^2+n)$ where $B \leq n$ is the number of bins,  which is superior to the time complexity of distance correlation ($\mathcal{O}(n^2)$) and is much faster when the data is dense. 

\subsection{Future work}
Although our estimator for the mutual dependence showed some nice properties under simulation, it remained to be established that it shows consistency for any nonlinearity which would require building up a theoretical proof. Further, in this paper, we assumed through out that we knew the cut-off frequency of the band-limited pdf or approximate cut-off frequency for the normal pdf (the band where most of the power of pdf lies, in case it is not band limited). However, in general this cut-off frequency is not known. A more in-depth analysis is needed to understand the behavior of our estimator as a function of the cut-off frequency.  

%\subsection{Wishlist for a dependence metric}

%axioms etc.

%\subsection{Mutual dependence}
%\begin{definition}
%	
%\end{definition}

%%%%%%%%%%%%%%%%%%%%%%%%%%%%%%%%%%%%%%%%%%%%%%%%%%%%%%%%%%%%%%%%%%%%%%%%%%%%%%%%
%\section{Estimation of mutual dependence} \label{sec:estimation}
%
%
%\subsection{Consistency of estimation}
%
%%%%%%%%%%%%%%%%%%%%%%%%%%%%%%%%%%%%%%%%%%%%%%%%%%%%%%%%%%%%%%%%%%%%%%%%%%%%%%%%%
%\section{Algorithms to compute mutual dependence}
%
%
%\subsection{Comparison of computational time}
%\subsection{Computation of cut-off frequency}

%%%%%%%%%%%%%%%%%%%%%%%%%%%%%%%%%%%%%%%%%%%%%%%%%%%%%%%%%%%%%%%%%%%%%%%%%%%%%%%%
%\section{Illustrations}
%
%\subsection{Motivating example}
%
%
%\subsection{Wikipedia Data}
%
%
%\begin{figure*}[t]
%	\centering
%	\includegraphics{./figures/wiki.pdf}
%	\caption{To be done....}
%	\label{fig:wiki}
%\end{figure*} 

%%%%%%%%%%%%%%%%%%%%%%%%%%%%%%%%%%%%%%%%%%%%%%%%%%%%%%%%%%%%%%%%%%%%%%%%%%%%%%%%

%clean up below

\bibliographystyle{ieeetr}
\bibliography{references}
\end{document}